\def\G{          \mathcal G}
\def\A{          \mathcal A}
\newcommand{\NN}{{\mathbb N}}
\newcommand{\RR}{{\mathbb R}}
\newcommand{\TT}{{\mathbb T}}
\newcommand{\ZZ}{{\mathbb Z}}
\newcommand{\SL}{\operatorname{SL}}
\newtheorem{thm}{Theorem}[section]
\newtheorem{lem}[thm]{Lemma}
\newtheorem{coro}[thm]{\sc Corollary}
\newtheorem{prop}[thm]{Proposition}
\theoremstyle{definition}
\theoremstyle{remark}
\newtheorem{rema}[thm]{Remark}
\numberwithin{equation}{section}
\begin{document}

\title{Simultaneous dense and nondense orbits and the space of lattices}

\author{Ronggang Shi \and Jimmy Tseng}
\address{R.S.:  
School of Mathematical Sciences, Tel Aviv University, Tel Aviv
69978, Israel, and School of Mathematical Sciences, Xiamen University, Xiamen 361005, PR China}
\email{ronggang@xmu.edu.cn}
\address{J.T.: School of Mathematics, University of Bristol, Bristol BS8 1TW, U.K.}
\email{j.tseng@bristol.ac.uk}

\thanks{J.T. acknowledges the research leading to these results has received funding from the European Research Council under the European Union's Seventh Framework Programme (FP/2007-2013) / ERC Grant Agreement n. 291147.}
\thanks{
R.S. is supported by NSFC (11201388), NSFC (11271278), ERC starter
grant DLGAPS 279893.
}



\date{}



\begin{abstract}
We show that the set of points nondense under the $\times n$-map on the circle and dense for the geodesic flow 
after we identify the circle with a periodic  horospherical orbit of the modular surface has full Haudorff dimension.  We also show the analogous result for toral automorphisms on the $2$-torus and a diagonal flow.  Our results can be interpreted in number-theoretic terms:  the set of well approximable numbers that are nondense under the $\times n$-map has full Hausdorff dimension.  Similarly, the set of well approximable $2$-vectors that are nondense under a hyperbolic toral automorphism has full Hausdorff dimension.  Our result for numbers is the counterpart to a classical result of Kaufmann.
\end{abstract}

\maketitle

\markright{}

\section{Introduction}


Let $f:Y \rightarrow Y$ be a dynamical system on a set $Y$ with a topology and let $\mathcal{S}$ be a finite subset.  Let $ND(f)$ denote the set of points with nondense orbit under $f$ and $ND(f, \mathcal{S})$ be the set of points whose orbit closures miss the subset $\mathcal{S}$.  Let $D(f)$ denote the set of points with dense orbits.   Given another map $\tilde{f}:Y \rightarrow Y$, one could ask for the size of the set $ND(f) \cap D(\tilde{f})$ or, more specifically, $ND(f, \mathcal{S}) \cap D(\tilde{f})$.\footnote{The proofs in~\cite{BET} work for $ND(f, \mathcal{S}) \cap D(\tilde{f})$, not just $ND(f) \cap D(\tilde{f})$.}  Such a question was first asked by V.~Bergelson, M.~Einsiedler, and the second-named author for commuting toral automorphisms and endomorphisms and commuting Cartan actions on certain cocompact homogeneous spaces in~\cite{BET}.  In this paper, we study this same question for certain noncompact phase spaces.  One of our results (Corollary~\ref{cor;gauss}) is even an example of a noncommuting pair of maps.  Consequently, the two main restrictions to the proof technique in~\cite{BET} can be overcome, at least in these special cases.  (Another example for the noncommuting case is the main result in~\cite{Ts14}, in which a different proof technique is used.  See also~\cite{BM}.)  For further details of the history of this question, see~\cite{BET}.  Note that our two main results have consequences for number theory (Corollary~\ref{coro3}), which allows us to complement a classical result of Kaufman~\cite{Kau} (see Remark~\ref{remaOpposite}).
\subsection{Statement of results}
Let $X:=X_d:=\SL_d(\mathbb R)/\SL_d(\mathbb Z)$, the space of unimodular lattices in $\RR^d$.  This space is noncompact and has finite Haar meaure, which we normalize to be a probability measure and denote by $\mu_X$. 
Let 
\begin{equation}\label{eq;h}
h_d: \mathbb R^{d-1}\to X_d \quad \mbox{be defined by}\quad
h_d(s)=
\left (
\begin{array}{cc}
1_{d-1} & s \\
0 & 1
\end{array}
\right)\mod \SL_d(\mathbb Z).
\end{equation}
For   $g\in \SL_d(\mathbb R)$, the left translation by $g$ on $X_d$ is also denoted by $g$. 
Let $\TT^d:=\RR^d/\ZZ^d$ denote the $d$-dimensional torus, $\TT$ denote the circle, and $[\cdot]: \RR^d \rightarrow \TT^d$ denote the natural projection map.  Let ${\mathcal O^{\mathbb N}_{ f}( y) }$
(where $\mathbb N=\{0, 1, 2,\ldots\}$ in this paper) denote the forward orbit of the point $y$ under $f$ and ${\mathcal O^{\mathbb Z}_{ f}( y) }$ denote the forward and backwards orbit.  Our two main results are the following two theorems.

\begin{thm}\label{thm1}
Let  $n\in \mathbb Z_{\ge 2}$,   $T:=T_ n$ be the $\times n$-map  on $\mathbb T$ and
 let $\mathcal{S}=\{s_1, \cdots, s_m\}$ be a finite set of points in $\mathbb R$.   Let 
 $t_0>0$ and
$
g=g_{t_0}:=\mathrm{diag}(e^{t_0}, e^{-t_0})\in \SL_2(\mathbb R)
$.
Then the set \begin{equation}\label{eq;thm1}
\{s\in \mathbb R:
\overline{\mathcal O^{\mathbb N}_{ T}( [s]) }  \cap \mathcal{S}=\emptyset \quad \mbox{and}\quad
 \overline{ \mathcal O_g^{\mathbb N}(h_2(s)) }=X_2  \}
\end{equation}
has full Hausdorff dimension.
\end{thm}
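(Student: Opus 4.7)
The plan is to construct, for each $\epsilon > 0$, a Cantor-like subset $K_\epsilon \subset \RR$ of Hausdorff dimension at least $1 - \epsilon$ contained in the set \eqref{eq;thm1}. Since $\epsilon$ is arbitrary, this gives full Hausdorff dimension. The construction is a base-$n$ Cantor construction enforcing the nondensity condition, interleaved at a sparse sequence of levels with refinements that enforce density of the forward $g$-orbit in $X_2$.

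\emph{Nondensity refinement.} Fix $\delta > 0$ small. On $[0,1]$, a level-$N$ sub-interval of length $n^{-N}$ corresponds to fixing the first $N$ base-$n$ digits of $s$, and the constraint $\mathrm{dist}(T_n^k [s], \mathcal S) \ge \delta$ for all $k \ge 0$ is a constraint on the shifted digit tails. At each level $N$, I discard those sub-intervals whose base-$n$ tail lies within the $\delta$-neighborhood of $\mathcal S$; since $|\mathcal S| = m$, only $O(m \delta n)$ children per parent are discarded. For $\delta$ sufficiently small, the Cantor set built from this refinement alone has Hausdorff dimension $\ge 1 - \epsilon/2$ and is contained in $\{s : \overline{\mathcal{O}^\NN_T([s])} \cap \mathcal S = \emptyset\}$.

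\emph{Density refinement.} Fix a countable base $\{U_j\}_{j \ge 1}$ for the topology of $X_2$ and a rapidly increasing sequence of levels $N_1 < N_2 < \cdots$. The conjugation identity $g h_2(s) = h_2(e^{2 t_0} s) g$ in $\SL_2(\RR)$ yields
\[
g^n \cdot h_2(s) = h_2(e^{2 n t_0} s) \cdot y_n \quad \text{in } X_2, \qquad y_n := g^n \cdot \SL_2(\ZZ),
\]
and the horocycle orbit through $y_n$ is closed with period $e^{2 n t_0}$; thus $s \mapsto g^n h_2(s)$ traces out this closed horocycle as $s$ ranges over $[0,1)$. At level $N_j$, for each parent interval $I$ from the nondensity refinement, I choose $n_j$ growing linearly in $N_j$ so that the sub-arc $\{g^{n_j} h_2(s) : s \in I\}$ has length $\gtrsim e^{2n_j t_0 (1-\alpha)}$ on its closed horocycle and is effectively equidistributed on $X_2$; this uses effective equidistribution of sufficiently long sub-arcs of long closed horocycles (going back to Sarnak, with effective versions due to Str\"ombergsson and others). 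Then a fraction at least $\mu_X(U_j)/2$ of $I$ maps into $U_j$, so I can retain a sub-interval $I' \subset I$ with $|I'|/|I| \ge \mu_X(U_j)/2$ and $g^{n_j} h_2(I') \subset U_j$, and continue the nondensity refinement inside $I'$.

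\emph{Dimension count and main obstacle.} The total dimension loss from the density refinements is $O\bigl(\sum_j \log(1/\mu_X(U_j))/(N_j \log n)\bigr)$, which can be made arbitrarily small by choosing the $N_j$ to grow sufficiently fast (and the basis $\{U_j\}$ with $\mu_X(U_j)$ not decaying too rapidly), so $\dim K_\epsilon \ge 1 - \epsilon$. Each $s \in K_\epsilon$ satisfies the nondensity condition by construction, and the forward $g$-orbit of $h_2(s)$ meets every $U_j$, hence is dense in $X_2$. The main technical obstacle is the density step: converting the topological density of long closed horocycles into a quantitative statement, uniform in $n_j$, about the relative measure on the horocycle of the sub-arc that lands in $U_j$. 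Some care is required near the cusp of $X_2$, where sub-arcs may spend unbalanced amounts of time; once suitable effective equidistribution is invoked, the rest reduces to an interleaved Cantor construction and a standard dimension bookkeeping.
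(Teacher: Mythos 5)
There is a genuine gap in the density refinement, and it is fatal to the dimension count as written. Fix $U_j$ a small ball of radius $r_j$ in the compact part of $X_2$ and let $I$ be a parent interval with $|I|=n^{-N_j}$. Unconditional effective equidistribution of sub-arcs of the closed horocycle of period $P=e^{2n_jt_0}$ requires arc length at least $P^{1/2+\kappa}$, so you are forced to take $P\gtrsim n^{2N_j}$. But then the claim that you can ``retain a sub-interval $I'\subset I$ with $|I'|/|I|\ge \mu_X(U_j)/2$ and $g^{n_j}h_2(I')\subset U_j$'' is false: the parametrization $s\mapsto g^{n_j}h_2(s)$ moves along the horocycle with speed $e^{2n_jt_0}$, and an excursion of the horocycle through a ball of radius $r_j$ in the compact part lasts only $O(r_j)$ horocycle time (stay inside the injectivity radius, lift to $\SL_2(\RR)$, and note $g^{-1}h_2(u)g$ leaves any fixed neighborhood of the identity once $u\gg r_j$, uniformly for $g$ in a compact set). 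Hence each connected component of $\{s\in I: g^{n_j}h_2(s)\in U_j\}$ has length $O(r_je^{-2n_jt_0})\lesssim n^{-2N_j}\ll |I|$; equidistribution only tells you these components have total relative measure $\gtrsim\mu_X(U_j)$, not that any single one is long. If you repair the step honestly by keeping one such component, the construction jumps from scale $n^{-N_j}$ to scale $\approx n^{-2N_j}$ with no branching, so the covering number at scale $n^{-2N_j}$ is at most $n^{N_j}$ and the lower box dimension (hence the Hausdorff dimension) of your limit set is at most about $1/2$, no matter how sparsely you choose the $N_j$: the loss is multiplicative in the exponent at the scale where it occurs, and sparseness of the density levels cannot absorb it.

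One could try to keep \emph{all} components of the preimage of $U_j$ inside $I$ (which would restore the box-counting), but then a second problem appears which you do not address: those components are not aligned with the base-$n$ grid, and descending to scale $\approx n^{-2N_j}$ forces the digits in positions $N_j+1,\dots,\approx 2N_j$, which must simultaneously avoid the forbidden windows coming from the nondensity condition. To guarantee enough admissible components landing in $U_j$ you would need equidistribution of the arcs weighted by the fractal (nondense) measure rather than by Lebesgue measure on $I$ --- a statement of measure-rigidity type that is far deeper than equidistribution of long closed horocycle arcs. This is essentially where the paper goes instead: it shows the nondense set contains closed $T$-invariant subsets $E(q)$ of dimension close to $1$ (winning, Proposition~\ref{propNondenseWinning}), converts dimension into a $T$-invariant ergodic measure $\nu$ on $E(q)$ of large entropy, proves that $\frac1N\sum_{i<N}(g^i)_*h_*\nu$ converges to $c\,\mu_X$ with $c>0$ using measure rigidity \cite{s12,l06} together with non-escape of mass \cite{ek12,k12}, deduces that $\nu$-almost every point has dense $g$-orbit (Proposition~\ref{corFullMeasure}), and finishes with the mass distribution principle. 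Your nondensity refinement and the general Cantor/bookkeeping framework are fine, but the density step needs an input of this strength rather than sub-arc equidistribution plus a single retained interval.
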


\begin{coro}\label{cor;gauss}
Let the assumptions be as in  Theorem \ref{thm1} and let $\G: \mathbb T\to \mathbb T$ be the Gauss map.
Then the set \[
\{s\in \mathbb R:
\overline{\mathcal O^{\mathbb N}_{ T}( [s]) }  \cap \mathcal{S}=\emptyset \quad \mbox{and}\quad
 \overline{ \mathcal O_\G^{\mathbb N}([s] })=\mathbb T  \}
\]
has full Hausdorff dimension.
\end{coro}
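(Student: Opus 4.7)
The plan is to derive the corollary directly from Theorem \ref{thm1} by establishing the implication
\[
\overline{\mathcal O^{\mathbb N}_{g}(h_2(s))}=X_2 \ \Longrightarrow \ \overline{\mathcal O^{\mathbb N}_{\G}([s])}=\mathbb T.
\]
Granted this implication, the set of $s$ appearing in \eqref{eq;thm1} is contained in the set described in Corollary \ref{cor;gauss}, so the latter inherits full Hausdorff dimension from Theorem \ref{thm1}. The first condition in \eqref{eq;thm1} about avoiding $\mathcal S$ under $T$ is exactly the first condition in the corollary, so no further work is needed there.

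The key implication rests on the classical coding of the geodesic flow on the modular surface by the Gauss map via continued fractions, due essentially to Artin and made precise by Series and others. One selects a Poincar\'e cross-section $\Sigma \subset X_2$ with non-empty interior for the continuous-time flow $\{g_t\}_{t \in \mathbb R}$, a return-time function $\tau: \Sigma \to \mathbb R_{>0}$, and a continuous projection $\pi : \Sigma \to \mathbb T$, with the property that for each irrational $s \in (0,1)$ the orbit $\{g_t h_2(s) : t \ge 0\}$ meets $\Sigma$ in a sequence of points whose $\pi$-images are precisely the forward Gauss-orbit of $[s]$ in $\mathbb T$. The remainder is then soft: density of the discrete orbit $\{g_{t_0}^k h_2(s) : k \in \mathbb N\}$ in $X_2$ forces density of the continuous orbit, since $g_{t_0}^k = g_{k t_0}$; the continuous orbit therefore visits every open subset of $\Sigma$, so the successive return points are dense in $\Sigma$, and applying $\pi$ shows the Gauss-orbit is dense in $\mathbb T$.

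A few small exceptional sets must be discarded --- rational $s$ (where continued fraction expansions terminate) and the null set of irrational $s$ whose geodesic trajectory intersects the measure-zero boundary of $\Sigma$ --- but these are either countable or of Hausdorff dimension zero, and hence do not affect the full-Hausdorff-dimension conclusion obtained from Theorem \ref{thm1}. The main (purely technical) obstacle is to set up the cross-section $\Sigma$ and the projection $\pi$ precisely enough that the identification of return points with Gauss iterates is uniform across all irrational $s$ in the set given by Theorem \ref{thm1}. Once this standard piece of hyperbolic dynamics is in place, the corollary follows.
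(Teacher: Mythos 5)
Your reduction is the same as the paper's: both proofs obtain the corollary from Theorem \ref{thm1} by showing that density of $\{g^k h_2(s)\}_{k\in\NN}$ in $X_2$ forces density of the Gauss orbit of $[s]$, so that the set in (\ref{eq;thm1}) is contained in the set of the corollary. Where you differ is in how that implication is justified. The paper makes it a one-line citation: by \cite[Theorem 1.7]{efs11}, a dense forward $g_t$-orbit of $h_2(s)$ implies that the continued fraction expansion of $s$ contains every finite pattern; since cylinder sets generate the topology, this gives $\overline{\mathcal O^{\NN}_{\G}([s])}=\TT$. You instead sketch the Artin--Series cross-section coding of the geodesic flow. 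That route can be made to work, but as written it keeps the central lemma as a black box (you yourself defer the construction of $\Sigma$ and $\pi$ as ``the main technical obstacle''), and three points need care: (i) density of the orbit in $X_2$ does not literally give visits to every relatively open subset of $\Sigma$, since such sets are not open in $X_2$; one needs a flow-box argument, which also shows the returns occur at arbitrarily large times; (ii) in the paper's coordinates the forward endpoint of the geodesic through $h_2(s)$ is an integer M\"obius image of $s$, so the $\pi$-images of the return points agree with the Gauss iterates of $[s]$ only up to a common tail (Serret's theorem), not ``precisely'' --- this is still enough for density, but should be said; (iii) the assertion that the set of $s$ whose trajectory meets $\partial\Sigma$ is countable or of Hausdorff dimension zero is not proved and depends on the choice of $\Sigma$ (any exceptional set of dimension strictly less than $1$ would in fact suffice), while the rational $s$ need no separate discussion since their $g$-orbits diverge and are never dense. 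With those details supplied, your argument is a correct, more self-contained alternative; what the paper's citation buys is precisely that all of this hyperbolic-coding bookkeeping is already packaged in \cite{efs11}.
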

\begin{proof}
Suppose that $s\in \mathbb R$ and 
\begin{equation}
 \overline{ \mathcal O_g^{\mathbb N}(h(s)) }=X_2 .
\end{equation}
It follows from \cite[Theorem 1.7]{efs11} that the continued fraction expansion of $s$
contains all patterns. Therefore $\overline{ \mathcal O_\G^{\mathbb N}([s] })=\mathbb T$.  Thus Corollary \ref{cor;gauss} follows from Theorem \ref{thm1}.
\end{proof}

We say two natural numbers $n, m \geq 2$ are \textit{multiplicatively dependent} if there exists integers $p, q\geq1$ such that $n^p = m^q$.  Otherwise, $n$ and $m$ are \textit{multiplicatively independent}.  For $T_m$, define  \[\widetilde{D}(T_m) :=  \{s\in \RR:  \overline{\mathcal O^{\mathbb N}_{ T_m}( [s])} = \TT  \}.\]  Here, $T_m$ is the $\times m$-map, or, equivalently, the multiplication map on $\TT$ by $m$.
Also for $t>0$, let
\[
 \widetilde D(g_t ,X_d):=\{s\in \mathbb R^{d-1}: \overline{ \mathcal O_{g_t}^{\mathbb N}(h_d(s))}=X_{d}\}. 
\]
Our method of the proof of Theorem \ref{thm1} can also be used to obtain

\begin{thm}\label{coro1}  Let the assumptions be as in  Theorem \ref{thm1}.  Let $\{T_m\}$ be all multiplication maps on $\TT$ by natural numbers $m$ which are multiplicatively independent from $n$.  
Let $\{t_i\}$ be a sequence of positive real numbers. 
Then the set \begin{align*}
\{s\in \mathbb R:
\overline{\mathcal O^{\mathbb N}_{ T_n}( [s]) } \cap  \mathcal{S}=\emptyset\} \cap
\big( \cap_m \widetilde{D}(T_m) \big)\cap
\big( \cap_i \widetilde{D}(g_{t_i},X_ 2) \big)
\end{align*}
has full Hausdorff dimension.
\end{thm}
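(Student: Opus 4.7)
The strategy is to enrich the Cantor-type construction underlying Theorem \ref{thm1} with additional stage-wise constraints that force dense orbit under each of the countably many maps $T_m$ (for $m$ multiplicatively independent from $n$) and $g_{t_i}$. Being in $\widetilde D(T_m)$ is equivalent to the forward $T_m$-orbit meeting every member of a countable basis of open sets in $\TT$, and similarly for $\widetilde D(g_{t_i}, X_2)$. So the target set is the intersection of the set from \eqref{eq;thm1} (with $g = g_{t_0}$ for some fixed $t_0$) with a countable list of conditions $(\mathcal C_j)_{j\ge 1}$, each of the form ``the forward $f_j$-orbit of the relevant lift of $s$ meets $U_j$,'' where $f_j$ ranges over our maps and $U_j$ ranges over basic open sets in the appropriate phase space.

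First I would isolate the Cantor-type construction behind the proof of Theorem \ref{thm1}. Such a construction builds, at each stage $k$, a finite disjoint union $E_k$ of closed intervals whose sizes shrink and whose count grows, so that $K := \bigcap_k E_k$ has Hausdorff dimension at least $1 - \epsilon$, and so that the intervals are chosen inductively to guarantee that every $s \in K$ has $T_n$-orbit missing $\mathcal S$ and $g_{t_0}$-orbit of $h_2(s)$ dense in $X_2$. I would then modify the inductive step: at stage $k$, after producing the candidate intervals as in Theorem \ref{thm1}, further restrict each to a sub-interval on which the conditions $\mathcal C_1, \ldots, \mathcal C_k$ are simultaneously enforced. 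This is possible because every $f_j$ is exponentially expanding in the relevant sense---the $\times m$-map expands by a factor $m$ per iterate, and $s \mapsto g_{t_i}^{N} h_2(s)$ expands the $s$-direction by $e^{2t_i N}$---so for $N_j$ large enough depending on the current interval, the $f_j^{N_j}$-image of the appropriate projection of that interval covers the entire phase space and therefore contains a sub-interval whose image lies inside $U_j$.

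The main obstacle is a dimension-loss estimate: imposing condition $\mathcal C_j$ removes a proportion of each interval, and these losses must not accumulate too much. By taking $N_j$ sufficiently large (and the new sub-intervals accordingly long compared with the single cut-out piece), at each stage $k$ the total fraction of length retained by the $k$ new constraints can be controlled; combined with the bounds from the base construction, this yields a product estimate on the Lebesgue measure of $E_k$ which, via a standard Frostman/mass-distribution argument on the enriched set $\widetilde K \subset K$, gives $\dim_H \widetilde K \ge 1 - 2\epsilon$. Letting $\epsilon \to 0$ through the parameters of the construction then gives full Hausdorff dimension.

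A remark on the multiplicative independence hypothesis: it is what makes the $T_n$-avoidance and $T_m$-density conditions mutually compatible during the inductive step. If instead $n^p = m^q$, then $T_n$ and $T_m$ share iterates, so forcing the $T_n$-orbit to miss a finite set $\mathcal S$ would impose rigid topological constraints on the $T_m$-orbit as well. Under multiplicative independence, on any sub-interval the $T_m$-orbits equidistribute rapidly and essentially independently of the $T_n$-dynamics there, so visits of $T_m$ to arbitrary open sets can be forced without disturbing the $\mathcal S$-avoidance already built into the base construction from Theorem \ref{thm1}.
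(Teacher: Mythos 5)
Your plan rests on two steps that are not actually established, and both sit exactly where the real difficulties lie. First, the ``covering'' claim for the flow conditions is false as stated: the image of an interval $J$ under $s \mapsto g_{t_i}^{N} h_2(s)$ is a one-dimensional expanding horocycle arc inside the three-dimensional, \emph{noncompact} space $X_2$, so it never covers the phase space, no matter how large $N$ is. What you actually need is that a sufficiently long arc $\{h_2(u)x : u \in e^{2t_iN}J\}$ meets a prescribed open set $U_j$, i.e.\ nondivergence plus equidistribution of expanding horocycle arcs; the length required depends on the behaviour of the base point (which can spend long stretches near the cusp), so $N_j$ cannot be chosen by expansion alone, and the ``dimension-loss'' bookkeeping you describe never engages with this escape-of-mass issue. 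This is precisely the central obstruction the paper identifies, and it is handled there not by a covering argument but by entropy and non-escape-of-mass results (Einsiedler--Kadyrov, Kadyrov) together with measure rigidity (Lindenstrauss; Einsiedler--Katok--Lindenstrauss), via Propositions~\ref{prop;equidistribution}, \ref{prop;nonescape} and \ref{corFullMeasure}.

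Second, the compatibility of the forced visits with the \emph{permanent} $T_n$-avoidance condition is asserted rather than proved. Forcing $T_m^{N}[s] \in U_j$ (or $g_{t_i}^{N}h_2(s) \in U_j$) pins the base-$n$ digits of $s$ in a whole window of positions to essentially prescribed values, which may be exactly the forbidden blocks; to continue the construction you must show that some preimage component of $U_j$ inside the current interval meets the avoidance set in a portion of nearly full dimension, uniformly in the stage, and your appeal to ``rapid and essentially independent equidistribution'' under multiplicative independence is the whole content of that step with no mechanism supplied (it is a Furstenberg/Rudolph--Johnson-type phenomenon, not a consequence of expansion). Note also that the paper's proof of Theorem~\ref{thm1} contains no Cantor-type construction to ``isolate'': it produces the closed invariant sets $E(q)$ via Schmidt games, extracts a positive-entropy $T_n$-invariant ergodic measure $\nu$ on $E(q)$ by the variational principle, and then Theorem~\ref{coro1} follows because \emph{each} density condition has full $\nu$-measure --- $\nu(\widetilde D(T_m))=1$ by \cite[Theorem~3.2]{BET} (this is where multiplicative independence enters, as measure rigidity), and $\nu(\widetilde D(g_{t_i},X_2))=1$ by Propositions~\ref{prop;equidistribution}--\ref{corFullMeasure} --- so the countable intersection still has full $\nu$-measure, and the mass distribution principle applied to the local dimension estimate (\ref{eqnPowerMeasure}) gives the dimension bound. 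To salvage your approach you would need quantitative equidistribution of expanding horocycle arcs with control of cusp excursions and a quantitative joint-digit (Fourier or effective-rigidity) argument for the pairs $(n,m)$; as written, the proposal has genuine gaps at both points.
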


\begin{thm}\label{thm2}
Let $T=T_\alpha$ be an automorphism of $\mathbb T^2$ induced by  the left multiplication of a hyperbolic  matrix $\alpha\in \SL_2(\mathbb Z)$ and  let $\mathcal{S}=\{s_1, \cdots, s_m\}$ be a finite set of points in $\mathbb R^2$.
 Let 
$t_0>0$ and 
 $g=g_{t_0}:=\mathrm{diag}(e^{t_0}, e^{t_0}, e^{-2t_0})\in  \SL_3(\mathbb R)$.
Then the set \begin{equation}\label{eq;thm2}
\{s\in \mathbb R^2:
\overline{\mathcal O^{\mathbb Z}_{ T}( [s]) }  \cap \mathcal{S}=\emptyset \quad \mbox{and}
 \quad \overline{ \mathcal O_{g}^{\mathbb N}(h_3(s)) }=X_3  \}
\end{equation}
has full Hausdorff dimension.
\end{thm}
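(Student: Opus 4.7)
The plan is to adapt the Cantor-type construction used to prove Theorem \ref{thm1} to the two-dimensional setting. The goal is to construct a Cantor-like set $K \subset \mathbb{R}^2$ of Hausdorff dimension arbitrarily close to $2$ such that every $s \in K$ satisfies \emph{both} the nondense condition $\overline{\mathcal{O}^{\mathbb{Z}}_{T}([s])} \cap \mathcal{S} = \emptyset$ \emph{and} the density condition $\overline{\mathcal{O}^{\mathbb{N}}_{g}(h_3(s))} = X_3$. The simultaneous imposition of both constraints is essential: while the nondense set and the dense set each have full Hausdorff dimension individually, their intersection is not automatically full-dimensional, so the construction must carefully interleave both requirements at every scale.

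For the nondense condition, since $\alpha \in \SL_2(\mathbb{Z})$ is hyperbolic with real eigenvalues $\lambda, \lambda^{-1}$, $|\lambda| > 1$, the iterated images $T_\alpha^{\pm k}(s_j)$ of each $s_j \in \mathcal{S}$ form a countable dense subset of $\mathbb{T}^2$ whose geometry at each scale is controlled by the Anosov splitting. I would proceed inductively: start from a bounded rectangle and, at level $k$, subdivide each surviving rectangle into subrectangles of small side length $\delta$ aligned with the eigendirections of $\alpha$, discarding any subrectangle whose $T_\alpha^{\pm j}$-images for $|j| \leq Ck$ approach $\mathcal{S}$ within distance $\delta^k$. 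Because the number of bad preimages up to time $Ck$ grows only polynomially in $k$ while the number of subrectangles grows exponentially, only a small fraction is discarded; the limit Cantor set has Hausdorff dimension close to $2$ and consists entirely of points whose bi-infinite $T_\alpha$-orbit is bounded away from $\mathcal{S}$.

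To additionally ensure density of the $g$-orbit in $X_3$, fix a countable dense sequence $\{y_i\} \subset X_3$ with shrinking balls $B_i = B(y_i, 1/i)$. At each level of the Cantor construction, using quantitative equidistribution of the horospherical orbits $\{g^n h_3(s)\}$ (via Kleinbock--Margulis nondivergence and mixing of $g$ on $X_3$), I would further restrict the surviving subrectangles so that some iterate $g^{n_k} h_3(s)$ lands in $B_{i(k)}$ for an increasing sequence $i(k) \to \infty$. This is feasible because the unstable horospherical subgroup of $g$ at the identity is precisely $\{h_3(s) : s \in \mathbb{R}^2\}$, and $g$ expands the horospherical parameter by $e^{3t_0}$ per step, so horospherical rectangles spread across $X_3$ at an exponential rate, and a positive proportion of subrectangles in any parent rectangle contains points whose $g$-orbit enters any prescribed open ball of $X_3$ at an appropriate time.

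The main obstacle is coordinating the two inductive constraints so that neither exhausts the supply of good subrectangles at any scale. At each level one must verify that the proportion of subrectangles removed to enforce the nondense condition and the proportion required to hit the next density target together leave a positive fraction for continued construction. This demands sharp quantitative estimates comparing the distribution of $\{g^n h_3(s)\}$ in $X_3$ with the horospherical Lebesgue measure restricted to small subrectangles, along with a careful interleaving of the two constraints at well-separated scales, so that the resulting Cantor set $K$ has Hausdorff dimension approaching $2 = \dim \mathbb{R}^2$, giving the theorem.
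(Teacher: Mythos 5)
Your approach is a genuinely different route from the paper's, and as presented it has gaps that would require substantial new work to close. The paper does not build a Cantor set by hand interleaving the two constraints. Instead it runs a measure-rigidity argument: it uses a Schmidt-game result of Tseng and Broderick--Fishman--Kleinbock to get a closed $T_\alpha$-invariant set $E(q)$ of nondense points with dimension near $2$; converts the dimension bound into an entropy bound via Proposition~\ref{propLargeEntropyFromDim}; invokes the variational principle to produce an ergodic $T_\alpha$-invariant measure $\nu$ on $E(q)$ of entropy near $h_{\mathrm{top}}(T_\alpha)$; shows via the relation $g_t h_3(\alpha s)=g_\alpha g_t h_3(s)$ and the Einsiedler--Katok--Lindenstrauss rigidity theorem on $X_3$ that every weak-$*$ limit of the $g$-averages of $h_*\nu$ is $c\mu_{X_3}$; shows $c>0$ by the entropy-vs-escape-of-mass inequalities of Einsiedler--Kadyrov and Kadyrov; and concludes (Proposition~\ref{corFullMeasure}) that $\nu(D(g,X_3))=1$ so that the mass distribution principle gives the dimension bound. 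The whole point of that machinery is to identify the limit measure \emph{abstractly}, avoiding any pointwise equidistribution of horospherical pieces from fractal sets.

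The central gap in your plan is the assertion that ``a positive proportion of subrectangles in any parent rectangle contains points whose $g$-orbit enters any prescribed open ball of $X_3$ at an appropriate time,'' cited to Kleinbock--Margulis nondivergence and mixing. Nondivergence only controls how much of a horospherical piece escapes to the cusp; it does not, by itself, give equidistribution into a fixed small ball. Mixing gives effective equidistribution of the expanded piece $g^{n_k}h_3(R)$ only with an error depending on the location of the base point $g^{n_k}h_3(s_0)$, and in your construction there is no control on how far that base point is in the cusp at the particular time $n_k$ you need --- the $T_\alpha$-nondense condition imposes no bound on cusp excursions of the $g$-orbit (indeed, the whole theorem is about intersecting $ND(T_\alpha)$ with the set of $g$-dense, hence necessarily unbounded, orbits). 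You would need either a positive-measure argument locating good base points inside the already-porous surviving collection, or a result giving equidistribution of fractal measures under horospherical flow (which is exactly the kind of statement the paper's measure rigidity step is supplying). Compounding this, you flag the coordination of the two pruning rules as ``the main obstacle'' but do not resolve it; with the density pruning applied at sparse levels and a dimension bookkeeping one can hope to do this, but none of that is written. In short: the skeleton of a direct Cantor construction is recognizable and not absurd, but the key lemma you would need is precisely the hard analytic input, and what you cite does not supply it.
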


  Let $T_\alpha$ and $T_\beta$ be two
   commuting hyperbolic automorphisms of $\TT^2$ and $\A := (T_\alpha, T_\beta)$ be the $\ZZ^2$-action on $\TT^2$ that is generated by~$T_\alpha$ and $T_\beta$. 
  A $\ZZ^2$-action $\A'$ on a torus $\TT^{d'}$ is an \textit{algebraic factor} of $\A$ if there is a surjective toral homomorphism $\varphi:\TT^2 \rightarrow \TT^{d'}$ such that $\A' \circ \varphi = \varphi \circ \A$ and is a \textit{rank-one factor} if, in addition, $\A'(\ZZ^2)$ has a finite-index subgroup consisting of the powers of a single map.  For $T_\beta$, let  \[\widetilde{D}(T_\beta) :=  \{s\in \RR^2:  \overline{\mathcal O^{\mathbb N}_{ T_\beta}( [s])} = \TT^2  \}.\]
Our method of the proof of Theorem \ref{thm2} can also be used to obtain
\begin{thm}\label{coro2}Let the assumptions be as in Theorem \ref{thm2}.  Let $\{T_\beta\}$ be all hyperbolic toral automorphisms of the $2$-torus that commute with $T_\alpha$ and such that the algebraic $\ZZ^2$-actions $(T_\alpha, T_\beta)$ are all without rank-one factors. 
Let $\{t_i\}$ be a sequence of positive real numbers. 
 Then the set \[
\{s\in \mathbb R^2:
\overline{\mathcal O^{\mathbb Z}_{ T_\alpha}( [s]) }  \cap \mathcal{S}=\emptyset\} \cap
\big(\cap_\beta \widetilde{D}(T_\beta) \big) \cap
\big(\cap_{i} \widetilde{D}(g_{t_i}, X_3) \big)
\]
has full Hausdorff dimension.
\end{thm}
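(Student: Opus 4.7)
The plan is to follow the strategy suggested for Theorem \ref{coro1}: apply the method of Theorem \ref{thm2} in a Cantor-type construction that simultaneously accommodates the additional countably many density requirements.

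One expects the proof of Theorem \ref{thm2} to produce, for each $\epsilon>0$, a Cantor-like subset $E=\bigcap_n E_n \subset \RR^2$ of Hausdorff dimension at least $2-\epsilon$ contained in the set in \eqref{eq;thm2}, where $E_n$ is a disjoint union of small cubes and the passage $E_n \to E_{n+1}$ refines every cube while preserving the $T_\alpha$-avoidance of $\mathcal{S}$ and the $g_{t_0}$-recurrence that forces density in $X_3$. The plan is to interleave additional refinement steps into this inductive construction.

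Enumerate the additional requirements as follows. The countable family $\{T_\beta\}$ paired with a countable basis of $\TT^2$ produces a sequence of open conditions of the form ``some forward iterate of $T_\beta$ sends $[s]$ into a basic open set $U\subset\TT^2$''; similarly, $\{g_{t_i}\}$ paired with a countable basis of $X_3$ yields conditions of the form ``some forward iterate of $g_{t_i}$ sends $h_3(s)$ into a basic open set $V\subset X_3$''. List all these conditions as $C_1,C_2,\ldots$. At stage $k$ of the inductive construction, inside each cube of $E_{n_k}$ pass to a subcube on which $C_k$ holds: since $T_\beta$ is hyperbolic, a high enough iterate stretches any fixed cube across $\TT^2$, and Lebesgue-generic density of $T_\beta$-orbits provides a subcube whose image meets $U$; similarly, equidistribution of long horospherical pieces in $X_3$ under $g_{t_i}$ provides subcubes whose image under $h_3$ meets $V$. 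Controlling the loss at each refinement step so that the total effect on Hausdorff dimension is $o(1)$, the resulting Cantor subset $E' \subset E$ lies in the target set of Theorem \ref{coro2} and still has Hausdorff dimension at least $2-\epsilon-o(1)$. Letting $\epsilon\to 0$ yields full Hausdorff dimension.

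The main obstacle is to verify at each induction step that the $T_\beta$-density refinement can be made inside \emph{every} cube of $E_{n_k}$: one must exclude the possibility that the construction from Theorem \ref{thm2} has inadvertently placed all the cubes of $E_n$ into a $T_\beta$-invariant proper subtorus, which would block visits to a given $U$. This is precisely where the no-rank-one-factors hypothesis on $(T_\alpha,T_\beta)$ is essential, playing the same role that multiplicative independence plays in Theorem \ref{coro1}: it rules out the algebraic obstructions that could force such a trapping, thereby guaranteeing that the required subcubes exist and that all the additional density conditions are compatible with the $T_\alpha$-avoidance of $\mathcal{S}$ and the $g_{t_0}$-density in $X_3$.
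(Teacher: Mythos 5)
Your proposal differs fundamentally from the paper's argument, and in a way that opens a genuine gap. The paper's proof of Theorem~\ref{coro2} is not a Cantor construction: it reuses a single $T_\alpha$-invariant ergodic measure $\nu$ of entropy close to $h_{top}(T_\alpha)$ supported on $E(q)$ (the same $\nu$ as in the proof of Theorem~\ref{thm2}), proves $\nu\bigl(\widetilde D(g_{t_i},X_3)\bigr)=1$ for each $i$ by running Propositions~\ref{prop;equidistribution}, \ref{prop;nonescape}, \ref{corFullMeasure} with $t_0=t_i$, and proves $\nu\bigl(\widetilde D(T_\beta)\bigr)=1$ for each $\beta$ by invoking the measure-rigidity result \cite[Theorem~3.2]{BET}. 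A countable intersection of $\nu$-full sets has $\nu$-full measure, and then the mass-distribution principle finishes, exactly as at the end of Section~\ref{secProofThm1and2}. No interleaved refinement is needed, and no per-step dimension accounting.

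The gap in your proposal is at the crucial step ``Lebesgue-generic density of $T_\beta$-orbits provides a subcube whose image meets $U$.'' The fractal $E(q)$ has Lebesgue measure zero, so Lebesgue genericity says nothing about whether points \emph{of the fractal} have dense $T_\beta$-orbits; a priori $E(q)$ could lie entirely in $ND(T_\beta)$. The statement you need is that a positive-entropy $T_\alpha$-invariant ergodic measure on $\TT^2$ gives full mass to $D(T_\beta)$ when $(T_\alpha,T_\beta)$ has no rank-one factor; this is a deep measure-rigidity theorem (the content of \cite[Theorem~3.2]{BET}, building on Rudolph--Katok--Spatzier--Einsiedler--Lindenstrauss), and it cannot be replaced by a ``hyperbolicity stretches cubes'' argument. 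Relatedly, your explanation of the role of the no-rank-one-factors hypothesis --- that it prevents the cubes of $E_n$ from being trapped in a $T_\beta$-invariant proper subtorus --- is not the mechanism at work: such a subtorus has dimension $\le 1$ and $E(q)$ has dimension near $2$, so trapping in a subtorus is impossible for elementary reasons; the hypothesis is needed instead to rule out non-Haar positive-entropy invariant measures for the $\ZZ^2$-action. Finally, you also misdescribe the source of $E(q)$: it arises from Schmidt's game winning sets (Proposition~\ref{propNondenseWinning}), not from an explicit Cantor construction attached to $g_{t_0}$-recurrence, so the inductive-refinement scaffolding you posit for Theorem~\ref{thm2} is not actually available to build on.
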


%
%

Finally, we note that our two main theorems have consequences for number theory.  Recall that a classical object in the theory of Diophantine approximation is the set of well approximable $d$-vectors $WA(d)$, which is defined to be the complement in $\RR^d$ of the set of badly approximable vectors $BA(d)$.  

\begin{coro}\label{coro3}  Let the assumptions be as in Theorems~\ref{coro1} and~\ref{coro2}.
The sets \begin{align*}\{s\in \mathbb R:
\overline{\mathcal O^{\mathbb N}_{ T_n}( [s]) } \cap  \mathcal{S}=\emptyset\} \cap 
\big(\cap_m \widetilde{D}(T_m) \big )
 \cap WA(1)\\ \{s\in \mathbb R^2:
\overline{\mathcal O^{\mathbb Z}_{ T_\alpha}( [s]) }  \cap \mathcal{S}=\emptyset\} \cap
\big( \cap_\beta \widetilde{D}(T_\beta)\big )
\cap WA(2)\end{align*} have full Hausdorff dimension.
\end{coro}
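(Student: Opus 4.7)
The plan is to deduce the corollary directly from Theorems~\ref{coro1} and~\ref{coro2} via the set inclusion
\[
\widetilde{D}(g_{t}, X_d) \subseteq WA(d-1)
\]
valid for any $t>0$ and for $d \in \{2,3\}$, where $g_t$ is the diagonal flow specified in the respective theorem. Granting this inclusion, fix any single $t_1>0$ and apply Theorem~\ref{coro1} (respectively Theorem~\ref{coro2}) with a sequence $\{t_i\}$ containing $t_1$; the full-dimensional set produced by that theorem is then contained in the set appearing in the corollary, which therefore also has full Hausdorff dimension in $\mathbb R$ (respectively $\mathbb R^2$).

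To establish the inclusion, the key ingredient is Dani's correspondence, which relates Diophantine properties of $s$ to the geometry of the forward orbit $\{g_\tau h_d(s): \tau\ge 0\}$ in the space of lattices: a vector $s\in\mathbb R^{d-1}$ belongs to $BA(d-1)$ if and only if this trajectory is bounded in $X_d$. The matrices $\mathrm{diag}(e^{t_0}, e^{-t_0})$ and $\mathrm{diag}(e^{t_0}, e^{t_0}, e^{-2t_0})$ appearing in Theorems~\ref{thm1} and~\ref{thm2} are, up to a harmless time rescaling in the three-dimensional case, precisely the flows for which Dani's correspondence is stated. Now if $s\in\widetilde{D}(g_{t_1}, X_d)$, then the discrete semigroup orbit $\{g_{t_1}^k h_d(s): k\in\mathbb N\}$ is dense in the noncompact space $X_d$ and hence unbounded. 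Since this discrete orbit sits inside the continuous trajectory $\{g_\tau h_d(s): \tau\ge 0\}$, the latter is unbounded as well, and Dani's correspondence forces $s\notin BA(d-1)$, i.e., $s\in WA(d-1)$.

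No substantive obstacle is anticipated: the inclusion amounts to a classical equivalence whose proof reduces to Minkowski's theorem applied to the unimodular lattice $g_\tau h_d(s)\mathbb Z^d$, while all of the dynamical and measure-theoretic heavy lifting is performed by Theorems~\ref{coro1} and~\ref{coro2}. The only routine verification is that the explicit diagonal matrices in the theorem statements match the conventions of Dani's correspondence, which they plainly do.
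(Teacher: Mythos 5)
Your proposal is correct and follows essentially the same route as the paper: the paper also deduces the corollary from Theorems~\ref{coro1} and~\ref{coro2} by noting that, via Dani's correspondence, $WA(d-1)$ consists of the vectors whose trajectories under $g_t$ are unbounded, hence contains the vectors with dense $g$-orbits in the noncompact space $X_d$. Your extra remarks (discrete orbit sitting inside the continuous trajectory, matching of the diagonal matrices with the correspondence's conventions) just spell out details the paper leaves implicit.
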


\begin{proof}
By Dani correspondence, $WA(d)$ is the set of $d$-vectors whose corresponding trajectories are not bounded and, thus, a superset of the $d$-vectors whose corresponding trajectories are dense under the diagonal element 
$g$.  The result is now immediate from our two theorems.
\end{proof}

\begin{rema}\label{remaOpposite}
It is a classical result of Kaufman~\cite{Kau} from 1980  (see also Queff\'elec-Ramar\'e~\cite{QR} for extensions of~\cite{Kau}) that the set $BA(1) \cap D(T_n)$ has full Hausdorff dimension; also see~\cite[Section~1.3.3]{HoSc} and~\cite{JoSa}.  
The first assertion in Corollary~\ref{coro3} immediately implies the other mixed case, namely, $ND(T_n) \cap WA(1)$ has full Hausdorff dimension. 
It is known (see e.g.~Pollington \cite{p79}) that the intersection of $WA(1)$ with 
the set of numbers which are non-normal in every base has full Hausdorff dimension.
\end{rema}

\subsection{Ideas in the proof}  The purpose of this paper is to study simultaneous dense and nondense orbits,
 first studied in~\cite{BET}, in the context of a noncompact phase space and, in the case of Corollary~\ref{cor;gauss}, for noncommuting maps.  The basic idea here follows that in~\cite{BET}; namely, form a large closed fractal of nondense orbits, apply measure rigidity to obtain Haar measure (up to a constant), and then use properties of entropy and measures to conclude our full Hausdorff dimension results.  The main difficulty in the cases that we consider in this paper is the possibility of the escape of all the mass, namely that the weak-$*$ limit of measures goes to $0$.   Using results in~\cite{ek12, k12, s12}, we show that not all the mass can escape when the Hausdorff  dimension is large and, thus, we still obtain dense orbits.

\subsubsection*{Acknowledgements}The authors thank the organizers of the Dynamics and Analytic Number Theory Easter School (2014) and Durham University for providing the venue at which this project was initiated and the organizers of the Interactions between Dynamics of Group Actions and Number Theory program (2014) and the Isaac Newton Institute for Mathematical Sciences for providing a venue for working on this project.  J.T. thanks Sanju Velani for discussions.  We also thank Dmitry Kleinbock, Barak Weiss, and the referees for their comments.

\section{Proof of Theorems~\ref{thm1}~and~\ref{thm2}}\label{secProofThm1and2}

We will prove Theorems~\ref{thm1}~and~\ref{thm2} together (deferring the proof of Propositions~\ref{prop;equidistribution}
and~\ref{corFullMeasure} to Sections~\ref{secEquidistribution} and~\ref{secFullMeasure}, respectively).  Let \begin{align*} X&:= X_2 \textrm{ or } X_3,\\ g_t &:= \mathrm{diag}(e^t, e^{-t}) \textrm{ or } \mathrm{diag}(e^t, e^t, e^{-2t}), \\
g & := g_{t_0} \textrm{ where } t_0 \textrm{ is a fixed positive number}, \\ 
 I&:=\mathbb R / \mathbb Z \textrm{ or } \mathbb R^2 / \mathbb Z^2,\\  T &:=T_n \textrm{ or } T_\alpha, \\ \mathcal{O}_T (s)&:= \mathcal O^{\mathbb N}_{ T_n}(s) \textrm{ or } \mathcal O^{\mathbb Z}_{ T_\alpha}( s).\end{align*}  
 The map $h=h_2$ or $h_3$ defined in (\ref{eq;h}) induces a map $I\to X$ all of which we denote by $h$. 
 Since $T_\alpha$ is a hyperbolic matrix, it has two real eigenvalues, one of which has absolute value $>1$.  Let $\lambda_\alpha$ denote this eigenvalue.

Fix a finite set $\mathcal{S}:=\{s_1, \cdots, s_m\}$ of points in $I$.  In $I$, pick a sequence of open balls $U_q$ centered at $0$ with radius $\rightarrow 0$ as $q \rightarrow \infty$.  Define the following closed $T$-invariant set \[E(q) := E_{T,\mathcal{S}}(q):=\{s \in I \mid \mathcal{O}_T(s) \  \cap \  \cup_{i=1}^m( U_q + s_i) = \emptyset\}.\]

\begin{prop}\label{propNondenseWinning}
 The set $\cup_{q} E(q) $ is winning and, therefore, has full Haudorff dimension. 
\end{prop}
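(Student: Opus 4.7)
The plan is to exhibit, for some $\alpha > 0$ depending only on $n$ (or $\lambda_\alpha$) and $m$, a winning strategy for Alice in Schmidt's $(\alpha,\beta)$-game on $I$ whose value $s_\infty$ lies in $\bigcup_q E(q)$. Since $\alpha$-winning sets have full Hausdorff dimension by Schmidt's theorem, this will suffice.

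In the $\times n$-map case, fix $q$ large enough that $2 m r_q < (1-\alpha)/(2n)$ where $r_q$ denotes the radius of $U_q$, and let $\beta \in (0,1)$ be arbitrary. At round $j$ let $B_j \subset I$ be Bob's ball, of radius $\rho_j = \rho_0(\alpha\beta)^j$, and pick the unique $k_j \in \mathbb N$ with $\rho_j n^{k_j} \in [1/(2n),\, 1/2)$. Then $T^{k_j}|_{B_j}$ is affine with expansion factor $n^{k_j}$ and image an arc of $\mathbb T$ of length $\ell_j \in [1/n,\, 1)$. Since $V_q := \bigcup_i (s_i + U_q)$ has total length $2 m r_q < \ell_j(1-\alpha)$, one can locate inside $T^{k_j}(B_j)$ a sub-arc of length $\alpha \ell_j$ disjoint from $V_q$; pulling back under $T^{k_j}$ yields $A_j \subset B_j$ of radius $\alpha\rho_j$ with $T^{k_j}(A_j) \cap V_q = \emptyset$. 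Since $\rho_j \to 0$ forces $k_j \to \infty$, the game value $s_\infty$ satisfies $T^{k_j}(s_\infty) \notin V_q$ for every $j$. A finite initial adjustment (possibly shrinking $U_q$ once more) handles the remaining iterates $k \leq k_0$, and we conclude $s_\infty \in E(q) \subset \bigcup_q E(q)$.

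The hyperbolic toral automorphism case is analogous but uses the two-dimensional geometry of $T_\alpha$. Because the orbit is two-sided, one must avoid the preimages $T_\alpha^{-k}(V_q)$ for $k \in \mathbb Z$: for $k > 0$ these are unions of parallelograms stretched in the stable direction of $T_\alpha$ with transverse width $\asymp r_q \lambda_\alpha^{-k}$, while for $k < 0$ they stretch in the unstable direction with transverse width $\asymp r_q \lambda_\alpha^{k}$. Since the stable and unstable directions span $\mathbb R^2$, and at each scale only a bounded number (depending on $m$) of such parallelograms intersect $B_j$ with transverse width comparable to $\rho_j$, Alice can find a sub-ball $A_j \subset B_j$ of radius $\alpha\rho_j$ disjoint from them all, provided $\alpha$ is chosen small relative to the eccentricity bound inherited from $\lambda_\alpha$ and to $m$.

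The main obstacle is handling preimage components that are much larger than $B_j$: such a component either misses $B_j$ entirely, or, thanks to the nested structure $B_j \subset A_{j-1}$ and Alice's earlier moves, has already been dodged at an earlier round. Uniformity of the expanding (respectively hyperbolic) geometry guarantees that the number of components at the relevant scale is bounded independently of $j$, so a single $\alpha$ works throughout the game. Schmidt's theorem then yields that $\bigcup_q E(q)$, containing an $\alpha$-winning set, has full Hausdorff dimension in $I$.
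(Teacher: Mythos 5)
The paper's proof is a one-line citation to Tseng \cite[Corollary~1.5]{T4} (for the $\times n$-map) and Broderick--Fishman--Kleinbock \cite[Theorem~1.1]{BFK} (for the hyperbolic toral automorphism), both of which establish winning for exactly these nondense-orbit sets. You instead attempt a self-contained Schmidt-game argument, essentially re-deriving special cases of those theorems. That is a legitimately different route and would be valuable if it worked, but as written it has two gaps.

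First, the pigeonhole step is off: after removing $V_q = \bigcup_i (s_i + U_q)$ (total measure $2mr_q$) from an arc of length $\ell_j$, the complement is a union of up to $m+1$ subarcs, so the longest guaranteed subarc has length about $(\ell_j - 2mr_q)/(m+1)$, not $\ell_j - 2mr_q$. Thus you need $\alpha \lesssim 1/(m+1)$ in addition to $r_q$ being small; the stated condition $2mr_q < (1-\alpha)/(2n)$ alone does not yield a subarc of length $\alpha\ell_j$.

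Second, and more seriously, the strategy only controls $T^{k_j}(s_\infty)$ for the specific exponents $k_j$ with $\rho_j n^{k_j} \in [1/(2n),1/2)$. Since $\rho_{j+1} = \alpha\beta\rho_j$, consecutive $k_j$'s jump by roughly $\log_n(1/(\alpha\beta))$, and Bob may choose $\beta$ small, so many intermediate iterates $k_j < k < k_{j+1}$ are left uncontrolled. Your closing remark about ``a finite initial adjustment'' handles only $k \le k_0$, not these gaps, so the conclusion $s_\infty \in E(q)$ does not follow. The standard fix --- which is what \cite{T4} and \cite{BFK} implement --- is for Alice to dodge, within a single game round, all preimage components of $V_q$ whose scale is comparable to $\rho_j$ (there are boundedly many such $k$'s per round), shrinking the ball by a bounded factor for each; this requires a more careful choice of $\alpha$ and a multi-step avoidance within each round. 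Without that, the argument does not close. The two-dimensional case inherits the same issue.

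In short: the approach is reasonable and different in spirit from the paper's citation, but the stated strategy misses the intermediate iterates, which is precisely the technical content that the cited results supply.
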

\begin{proof}
Apply~\cite[Corollary~1.5]{T4} and~\cite[Theorem~1.1]{BFK}.
\end{proof}

Consequently, $E(q)$ has Hausdorff dimension as close as we like to full Hausdorff dimension, provided we choose $q$ large enough.  Large Hausdorff dimension implies that the topological entropy of $T$ restricted to such $E(q)$ is close to $h_{top}(T)$:

\begin{prop}[Proposition~2.4 of \cite{BET}]\label{propLargeEntropyFromDim}
Let $F \subset I$ be a closed $T$-invariant set.  Then we have that \[h_{top}(T \vert_F) \geq h_{top}(T) - (d - \dim F) \log(\lambda_1)\] where $\lambda_1 := n$ if $T = T_n$ or $\lambda_1:=|\lambda_\alpha|$ if $T= T_\alpha$.
\end{prop}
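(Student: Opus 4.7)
In the expanding case $T = T_n$ on $\TT$, the plan is to use the generating Markov partition $\mathcal{P} = \{[j/n, (j+1)/n) : 0 \leq j < n\}$, whose $k$-th refinement $\mathcal{P}_k := \bigvee_{j=0}^{k-1} T^{-j}\mathcal{P}$ consists of $n^k$ intervals of length $n^{-k}$. Writing $N_k(F)$ for the number of atoms of $\mathcal{P}_k$ meeting $F$, the $T$-invariance of $F$ forces the submultiplicativity $N_{k+\ell}(F) \leq N_k(F)N_\ell(F)$, so Fekete's lemma gives the limit $h_{top}(T|_F) = \lim_k \frac{1}{k}\log N_k(F)$. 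Using these same $N_k(F)$ atoms as a direct cover of $F$ at scale $n^{-k}$ yields the Hausdorff dimension estimate $\dim_H F \leq \lim_k \frac{\log N_k(F)}{k \log n} = h_{top}(T|_F)/\log n$, which rearranges to the required inequality with $d=1$, $\lambda_1 = n$, and $h_{top}(T_n) = \log n$.

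The hyperbolic automorphism case $T = T_\alpha$ on $\TT^2$ follows the same template but needs an additional projection step, since $T_\alpha$ contracts the stable direction and so the Bowen cover only shrinks transverse to the stable leaves. Fix an Adler--Weiss Markov partition $\mathcal{P}$ whose elements are rectangles aligned with the stable and unstable linear foliations; the refinement $\bigvee_{j=0}^{k-1}T_\alpha^{-j}\mathcal{P}$ then consists of rectangles of bounded stable width and unstable width of order $|\lambda_\alpha|^{-k}$, and $h_{top}(T_\alpha|_F) = \lim_k \frac{1}{k}\log N_k(F)$ as before. Since $\mathcal{P}$ is finite, I would pick a rectangle $R \in \mathcal{P}$ with $\dim_H(F \cap R) = \dim_H F$ and let $\pi$ denote projection along the stable foliation within $R$ onto an unstable transversal. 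The cover of $F \cap R$ by the atoms of $\mathcal{P}_k$ meeting it pushes forward to a cover of $\pi(F\cap R)$ by the same number ($\leq N_k(F)$) of unstable intervals of length of order $|\lambda_\alpha|^{-k}$, giving $\dim_H \pi(F \cap R) \leq h_{top}(T_\alpha|_F)/\log|\lambda_\alpha|$. The slicing inequality $\dim_H(F\cap R) \leq \dim_H \pi(F \cap R) + 1$ (a standard consequence of $\dim_H(A \times B) \leq \dim_H A + \overline{\dim}_B B$, since the fibres of $\pi$ are one-dimensional linear segments) then yields $h_{top}(T_\alpha|_F) \geq (\dim_H F - 1)\log|\lambda_\alpha|$, matching the claim with $d=2$, $\lambda_1 = |\lambda_\alpha|$, and $h_{top}(T_\alpha) = \log|\lambda_\alpha|$.

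The main obstacle is the toral case: one must verify that projecting a Bowen cover along stable leaves really produces a cover of the transversal by intervals of length of order $|\lambda_\alpha|^{-k}$, despite the stable foliation being a globally dense irrational foliation of $\TT^2$. Localizing inside a single Markov rectangle circumvents this, since within such a rectangle the stable and unstable foliations are smooth product foliations of uniformly bounded geometry, making the projection a genuine linear map with one-dimensional linear fibres to which the slicing inequality cleanly applies.
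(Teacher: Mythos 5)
The paper does not actually prove this proposition; it simply cites it as Proposition~2.4 of~\cite{BET}. Your self-contained proof via Markov partitions is the standard route and is essentially correct, with one technical imprecision in the $T_\alpha$ case worth flagging.

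For $T_n$ the argument is fine: submultiplicativity of $N_k(F)$ from forward invariance, Fekete, and then upper box dimension bounding Hausdorff dimension. For $T_\alpha$, however, the assertion ``$h_{top}(T_\alpha|_F) = \lim_k \frac{1}{k}\log N_k(F)$ as before'' is not literally correct: since $T_\alpha$ is invertible, the forward itinerary map $\phi: F \to \Sigma^+$ induced by the Markov partition is a \emph{semi}conjugacy (it collapses stable segments), so one only gets $\lim_k \frac{1}{k}\log N_k(F) = h_{top}(\sigma|_{\phi(F)}) \le h_{top}(T_\alpha|_F)$. Fortunately, this is exactly the inequality your argument needs — you then cover $\pi(F\cap R)$ by the $\le N_k(F)$ unstable intervals of length $\asymp |\lambda_\alpha|^{-k}$, giving $\dim_H \pi(F\cap R) \le \lim_k \tfrac{\log N_k(F)}{k\log|\lambda_\alpha|} \le h_{top}(T_\alpha|_F)/\log|\lambda_\alpha|$ — so the proof goes through once the equality is replaced by the correct one-sided bound. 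The remaining steps (choosing $R$ with $\dim_H(F\cap R)=\dim_H F$, the product/slicing inequality $\dim_H(A\times B) \le \dim_H A + \overline{\dim}_B B$ applied inside one rectangle to sidestep the global density of the stable foliation, and the arithmetic with $h_{top}(T_n)=\log n$, $h_{top}(T_\alpha)=\log|\lambda_\alpha|$) are all sound.
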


The variational principle now gives the existence of a $T$-invariant Borel probability measure $\nu:= \nu(q)$, whose support lies in $E(q)$, such that $h_\nu(T\vert_{E(q)})$ is as close to $h_{top}(T)$ as we like, provided  we choose $q$ large enough.  By ergodic decomposition, we may restrict to an ergodic component and thus assume that $\nu$ is ergodic.  

\begin{prop}\label{prop;equidistribution}
Let $\nu$ be a $T$-invariant, ergodic probability measure on $I$ with positive entropy. Then any weak-$*$
limit $\mu$ of \begin{equation}\label{eq;discrete}
\frac{1}{N}\sum_{i=0}^{N-1} (g^i)_*(h_*\nu) \quad \mbox{as}\quad N\to \infty
\end{equation}
is equal to $c\mu_X$ where $\mu_X$ is the probability Haar measure on $X$ and $0\leq c\leq 1$. 
\end{prop}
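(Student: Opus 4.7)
The plan is to establish enough invariance for $\mu$ to force it to be a multiple of $\mu_X$: specifically, $\mu$ is $g$-invariant (automatic from Cesaro averaging) and, more importantly, $U$-invariant with $U := h(\RR^{d-1})$. One then applies Ratner-type measure classification and uses the $g$-invariance to rule out components supported on proper homogeneous subspaces.

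The $g$-invariance of $\mu$ is the standard telescoping Cesaro argument: the difference of shifted and unshifted averages is $O(1/N)$ against any $\phi\in C_c(X)$. The crucial step is $U$-invariance. For any $r\in\RR^{d-1}$, using the homomorphism property of $h$ and the commutation $gh(r)g^{-1} = h(\lambda r)$ with $\lambda = e^{2t_0}$ (resp.\ $e^{3t_0}$) when $d=2$ (resp.\ $d=3$), we obtain $u_r g^i = g^i u_{\lambda^{-i}r}$ (writing $u_r := h(r)$), and hence for every $s$,
\begin{equation*}
u_r \cdot g^i h(s)\Gamma \;=\; g^i h(s + \lambda^{-i}r)\Gamma \;=\; \bigl(g^i h(s)\bigr)\,h(\lambda^{-i}r)\,\Gamma.
\end{equation*}
As $i\to\infty$, $h(\lambda^{-i}r)\to e$ in $G$, so the $X$-distance between $[g^i h(s) h(\lambda^{-i}r)]$ and $[g^i h(s)]$ is bounded by $d_G(e, h(\lambda^{-i}r))\to 0$ uniformly in $s$ (using a right-$G$-invariant metric on $G$). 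Uniform continuity of any $\phi\in C_c(X)$ then gives
\begin{equation*}
\sup_s\bigl|\phi(u_r g^i h(s)\Gamma) - \phi(g^i h(s)\Gamma)\bigr|\longrightarrow 0,
\end{equation*}
and Cesaro averaging yields $(u_r)_*\mu = \mu$ for every $r\in\RR^{d-1}$.

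Having $\mu$ invariant under both the abelian unipotent $U$ and the diagonal element $g$, Ratner's measure classification theorem applied to $U$ (Dani's earlier theorem suffices when $d=2$) decomposes $\mu$ into $U$-ergodic components, each of which is a finite $L$-invariant measure on a closed orbit of some closed subgroup $U\subseteq L\subseteq G$. The $g$-invariance of $\mu$ then eliminates all components with $L\subsetneq G$: the relevant finite-volume closed $L$-orbits all occur at cusps of $X$, and $g$ moves them freely (for instance, in $X_2$ the closed $U$-orbit at cusp depth $t$ maps to the one at depth $t+t_0$), so any $g$-invariant finite-mass measure on their union must vanish. Hence $\mu = c\mu_X$ with $c = \mu(X)\in[0,1]$.

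The main obstacle is the measure-rigidity step in the $d=3$ setting: one must enumerate the intermediate closed subgroups $U\subseteq L\subsetneq\SL_3$ and verify that the $g$-action on the moduli of their finite-volume closed orbits in $X_3$ is non-trivial in every case, so that no such orbit supports a $g$-invariant component of $\mu$.
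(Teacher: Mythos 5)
The $U$-invariance step is wrong, and the mistake is exactly where you hid it: the choice of metric on $G$.

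Write $a=g^{i}h(s)$ and $b=h(\lambda^{-i}r)$, so that $u_r\,g^{i}h(s)\Gamma = ab\,\Gamma$. You bound $d_X(ab\Gamma,a\Gamma)$ by $d_G(b,e)$. That estimate is correct for a \emph{left}-invariant $d_G$ (since $d_G(ab,ae)=d_G(b,e)$), but a left-invariant metric on $G$ does \emph{not} descend to $X=G/\Gamma$: for $\pi\colon G\to G/\Gamma$ to be locally isometric the metric must be right-$\Gamma$-invariant, and the natural choice is a \emph{right}-$G$-invariant metric — the one you actually name in your write-up. For a right-invariant metric,
\begin{equation*}
d_G(ab,a)=d_G\bigl(aba^{-1},e\bigr),\qquad
aba^{-1}=g^{i}\,h(s)\,h(\lambda^{-i}r)\,h(s)^{-1}\,g^{-i}=g^{i}h(\lambda^{-i}r)g^{-i}=h(r),
\end{equation*}
which is a \emph{fixed} element of $G$, not tending to $e$. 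Conjugation by $g^{i}$ expands the $U$-direction by exactly the factor $\lambda^{i}$ that you shrank $r$ by, so the perturbation does not become small in $X$: the lattices $g^{i}h(s)\ZZ^{d}$ and $g^{i}h(s+\lambda^{-i}r)\ZZ^{d}=h(r)\cdot g^{i}h(s)\ZZ^{d}$ differ by the fixed linear map $h(r)$, and the basis you are comparing is wildly unreduced (one vector has length $e^{t_0 i}$), so ``nearby bases'' says nothing about the Chabauty distance. Consequently the Cesaro averages of the differences do not go to $0$, and the conclusion $(u_r)_*\mu=\mu$ does not follow.

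There is also a quick sanity check showing the claimed invariance must fail: your argument uses neither $T$-invariance nor positive entropy of $\nu$, so if it worked it would yield $U$- and $g$-invariance of every weak-$*$ limit of $\frac{1}{N}\sum_{i<N}(g^{i})_{*}h_*\nu$ for \emph{any} probability measure $\nu$ on $I$. Taking $\nu=\delta_{s_0}$ with $s_0$ badly approximable gives a bounded forward orbit, hence a compactly supported weak-$*$ limit of full mass; but a finite measure on $X_2$ invariant under both $U$ and $g$ is necessarily a multiple of $\mu_X$ (the $g$-invariance kills any closed-horocycle component by the scaling argument), which has noncompact support. Contradiction. So unipotent invariance simply cannot hold in this generality.

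This is the reason the paper cannot take the Ratner route and instead invokes diagonalizable-action measure rigidity, where positive entropy is indispensable. The paper reduces to continuous averages via (\ref{eq;link}), then — in the $\times n$ case — passes to the $S$-arithmetic extension $Y=PGL_2(\RR)\times\prod PGL_2(\QQ_{p_i})/\Gamma$, uses \cite[Lemma 5.3]{s12} and \cite[Theorem 3.1]{s12} to transfer the positive-entropy hypothesis to the relevant diagonal element $a$, and then applies Lindenstrauss's measure classification \cite{l06}; in the toral automorphism case it applies \cite[Theorem 3.1]{s12} together with Einsiedler–Katok–Lindenstrauss \cite{ekl06}. Both of these rigidity theorems have a positive-entropy hypothesis, which is precisely where $h_\nu(T)>0$ is used. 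Your proposal drops that hypothesis, and the counterexample above shows it cannot be dropped.
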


We prove Proposition~\ref{prop;equidistribution} in Section~\ref{secEquidistribution} using \cite[Theorems 3.1,~5.1~and~6.1]{s12} of the first-named author.  To guarantee that $c \neq 0$, we also need the the following proposition, ensuring that not all of the mass can escape.  The proposition is essentially a corollary of~\cite[Theorem 1.6]{ek12} and~\cite[Theorem 1.3]{k12}.

\begin{prop}\label{prop;nonescape}
Let $\nu$ be a $T$-invariant, ergodic probability measure on $I$ with entropy
$h_\nu(T)= \delta h_{top}(T)$ where $0\le \delta\le 1$. Then any weak-$*$
limit $\mu$ of (\ref{eq;discrete})
will have total mass $\mu(X)\ge (1+\mathrm{dim}(I))\delta-\mathrm{dim}( I)$.
\end{prop}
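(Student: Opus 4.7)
The plan is to sandwich the entropy $h_\mu(g)$ of any subsequential weak-$*$ limit $\mu$ of (\ref{eq;discrete}) between a lower bound coming from $h_\nu(T)=\delta h_{top}(T)$ and an upper bound from the entropy-in-the-cusp inequalities of \cite{ek12,k12}, and then solve for $c:=\mu(X)$. Writing $\mu_N := \frac{1}{N}\sum_{i=0}^{N-1}(g^i)_*(h_*\nu)$, the fact that $g_*\mu_N - \mu_N$ has total variation $O(1/N)$ ensures that any subsequential weak-$*$ limit $\mu$ is automatically $g$-invariant with $c \in [0,1]$.

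For the lower bound I would show $h_\mu(g)\ge\delta\cdot h_{top}(g)$. The support of $h_*\nu$ lies in a single horospherical leaf of $g$, so the conditional Hausdorff dimension of the limit $\mu$ along unstable leaves equals $\dim_H(\nu)$. By Billingsley in the $X_2$ case, and by Ledrappier--Young combined with Barreira--Pesin--Schmeling in the $X_3$ case, one gets $\dim_H(\nu) = \dim(I)\cdot\delta$. A Brin--Katok-type local entropy computation along the unstable foliation then yields
\[
h_\mu(g)\;\ge\;\dim_H(\nu)\cdot\lambda^u\;=\;\delta\cdot h_{top}(g),
\]
using $h_{top}(g) = \dim(I)\cdot\lambda^u$ with $\lambda^u = 2t_0$ (resp.\ $3t_0$), and upper semicontinuity of entropy along asymptotically $g$-invariant sequences passes the bound from $\mu_N$ to $\mu$. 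This dimension-to-entropy transfer is the main technical hurdle: $T$ and $g$ are not literally conjugate, and one has to match a partition generating $h_\nu(T)$ on $I$ with a partition of $X$ adapted to the unstable foliation of $g$, tracking the identifications carefully through the pushforward.

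For the upper bound I would invoke \cite[Theorem~1.6]{ek12} on $X_2$ and \cite[Theorem~1.3]{k12} on $X_3$, which give
\[
h_\mu(g)\;\le\;\frac{\dim(I)+c}{\dim(I)+1}\cdot h_{top}(g)
\]
(as $\dim(I)+1 = d$ in each case). Combining with the lower bound yields $\delta\le(\dim(I)+c)/(\dim(I)+1)$, equivalently $c\ge(1+\dim(I))\delta-\dim(I)$, as required.
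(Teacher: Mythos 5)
Your outline correctly identifies the main ingredients (Brin--Katok/Young to get the pointwise dimension of $\nu$, the escape-of-mass machinery of \cite{ek12,k12}, and the final algebra producing $c\ge(1+\dim I)\delta-\dim I$), but the intermediate step of lower-bounding the entropy of the limit measure $\mu$ is a genuine gap, and it is also not how the paper proceeds. The paper first reduces the discrete averages (\ref{eq;discrete}) to the continuous averages (\ref{eq;continuous}) via (\ref{eq;link}), then uses Brin--Katok (for $T_n$) and \cite[Lemma~3.2]{y82} (for $T_\alpha$) to get $\lim_{r\to0}\log\nu(B(s,r))/\log r=\delta\dim I$ a.e., truncates to a set $I_\epsilon$ on which the power-law bound (\ref{eqnPowerMeasure}) holds uniformly, and then applies \cite[Theorem~1.6]{ek12} and \cite[Theorem~1.3]{k12} \emph{directly}: these theorems take as input the dimension of the \emph{initial} measure $h_*(\nu|_{I_\epsilon})$ in the unstable horospherical direction (in the sense of \cite[Definition~1.5]{ek12}) and output a lower bound on the mass of any weak-$*$ limit of the $g_t$-averages. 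No entropy of the limit $\mu$ needs to be computed.

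Your Step~1, the claim $h_\mu(g)\ge\delta\,h_{top}(g)$ for the \emph{limit} measure $\mu$, is false in general, and it is false for precisely the reason this proposition is needed: when mass escapes to the cusp, entropy escapes with it, so the conditional dimensions of $\mu$ along unstable leaves need not equal $\dim_H(\nu)$. (Indeed, if Step~1 held, then combining it with $h_{\mu/c}(g)\le h_{top}(g)$ already gives $c\ge\delta$, a strictly stronger conclusion than the proposition, so Step~2 would be superfluous --- a sign something is off.) The appeal to ``upper semicontinuity of entropy \dots passes the bound from $\mu_N$ to $\mu$'' is also backwards: upper semicontinuity transfers \emph{upper} bounds, not lower bounds, and in the noncompact setting it fails precisely because of escape of mass; quantifying that failure is the content of \cite{ek12,k12}. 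The correct fix is to drop Step~1 entirely and, as in the paper, feed the pointwise-dimension estimate on $h_*\nu$ into the mass-lower-bound theorems, which already encode the entropy-in-the-cusp bookkeeping internally.
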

\begin{proof}
Since \begin{equation}\label{eq;link}
\frac{1}{t_0N}\int_0^{t_0N} (g_t)_*(h_*\nu)\, dt
=\frac{1}{t_0}\int_{0}^{t_0}(g_t)_*\frac{1}{N}\sum_{k=0}^{N-1} (g^k)_*(h_*\nu)~dt, 
\end{equation}
it suffices to show that any weak-$*$
limit $\mu_1$ of
\begin{equation}\label{eq;continuous}
\frac{1}{\tau}\int_0^\tau (g_t)_*(h_*\nu)\, dt \quad \mbox{as}\quad \tau \to \infty
\end{equation}
has total mass $\mu_1(X)\ge (1+\mathrm{dim}(I))\delta-\mathrm{dim}( I)$.

We claim that for $\nu$ almost every $s\in I$ one has
\begin{equation}\label{eq;local}
\lim_{r \searrow 0}  \frac{\log\nu(B(s,r))}{\log r}= \delta \mathrm{dim}( I).
\end{equation}
For $T=T_n$, the claim follows from 
the main theorem of \cite{bk}.
For $T=T_\alpha$ the claim follows from 
  \cite[Lemma 3.2]{y82}.
Therefore for any $\epsilon >0$ (sufficiently small) there exists $r_0>0$  and a subset  $I_\epsilon \subset I$ such that for $0<r\le r_0$ 
we have 
\[
\nu(I_\epsilon)>1-\epsilon\] and 
\begin{align}\label{eqnPowerMeasure}\nu(B(s, r))\le r^{(\delta-\epsilon) \mathrm{dim}(I)}
\quad \mbox{for }s\in I_\epsilon.
\end{align}
Therefore $\frac{1}{1-\epsilon}h_*(\nu|_{I_\epsilon})$ has dimension at least $(\delta-\epsilon) \mathrm{dim}(I)$
in the unstable horospherical direction of $g_1$ according to \cite[ Definition 1.5]{ek12}. 
Therefore 
\cite[Theorem 1.6]{ek12}  and \cite[Theorem 1.3]{k12}
imply that 
\[
\mu_1(X) \ge (1-\epsilon ) (1+\mathrm{dim}(I))(\delta-\epsilon)-\mathrm{dim}( I).
\]
The conclusion follows 
by letting $\epsilon $ decrease  to $0$. 
\end{proof}


We wish to understand the following dense set of forward orbits
 \[D(g, X) :=\{s\in I : \overline{\mathcal O_g^{\mathbb N}(h(s))}=X\}\] in terms of the measure $\nu$ supported on $E(q)$.

\begin{lem}\label{lemmTInvariance}  The set $D(g, X)$ is $T$-invariant.
\end{lem}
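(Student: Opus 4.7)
The strategy is to exhibit, in each case, an algebraic relation $h(Ts) = \psi_s \cdot h(s)$ and verify that $\psi_s$ preserves the property of having a dense forward $g$-orbit in $X$, so that $s \in D(g, X)$ forces $Ts \in D(g, X)$.

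For $T = T_\alpha$ (so $I = \TT^2$, $X = X_3$), I would take $\tilde\alpha := \begin{pmatrix} \alpha & 0 \\ 0 & 1 \end{pmatrix} \in \SL_3(\ZZ)$, independent of $s$. The matrix identity
\[
\begin{pmatrix} \alpha & 0 \\ 0 & 1 \end{pmatrix}\begin{pmatrix} I_2 & s \\ 0 & 1 \end{pmatrix} = \begin{pmatrix} I_2 & \alpha s \\ 0 & 1 \end{pmatrix}\begin{pmatrix} \alpha & 0 \\ 0 & 1 \end{pmatrix}
\]
shows $\tilde\alpha \cdot h_3(s) = h_3(\alpha s) = h_3(Ts)$ in $X_3$, since the right-hand factor is in $\SL_3(\ZZ)$ and so disappears in the coset. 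Since $g = \mathrm{diag}(e^{t_0}, e^{t_0}, e^{-2t_0})$ is a scalar on the top two coordinates, it commutes with $\tilde\alpha$. Hence $g^k h_3(Ts) = \tilde\alpha\, g^k h_3(s)$ for all $k$, and
\[
\overline{\mathcal{O}_g^{\mathbb N}(h_3(Ts))} = \tilde\alpha \cdot \overline{\mathcal{O}_g^{\mathbb N}(h_3(s))} = \tilde\alpha\, X_3 = X_3,
\]
giving $Ts \in D(g, X_3)$.

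For $T = T_n$ (so $I = \TT$, $X = X_2$), a direct computation gives $h_2(Ts) = u_{(n-1)s} \cdot h_2(s)$, where $u_r := \begin{pmatrix} 1 & r \\ 0 & 1 \end{pmatrix}$ is the unstable horocycle generator for $g_{t_0}$. Because $u_r$ does not commute with $g$, the previous trick fails and the lemma reduces to the claim that the set $D^* := \{x \in X_2 : \overline{\mathcal{O}_g^{\mathbb N}(x)} = X_2\}$ is invariant under the unstable horocycle flow. Given $x \in D^*$ and $y := u_r x$, note $y \in \overline{\mathcal{O}_g^{\mathbb N}(x)} = X_2$, so there exists $k_j \nearrow \infty$ with $g^{k_j} x \to y$. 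To show $y \in D^*$, I would approximate an arbitrary $z \in X_2$: fix $j$ large so that $g^{k_j} x$ is very close to $y$, and then use density of $\{g^\ell x : \ell \ge k_j\}$ in $X_2$ to find $m \ge 0$ with $g^m g^{k_j} x = g^{m+k_j} x$ close to $z$; by continuity of $g^m$, then $g^m y \approx g^m g^{k_j}x \approx z$.

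The main obstacle is the Lipschitz control in the $T_n$ case: the error in $d(g^m y, g^m g^{k_j} x)$ is at most $\mathrm{Lip}(g^m) \cdot d(y, g^{k_j} x)$, and $\mathrm{Lip}(g^m)$ grows exponentially in $m$. For the approximation to close, one must select $m$ from a bounded window while still approximating an arbitrary target $z$. Here I expect the argument to use a uniform recurrence statement for the dense orbit $\{g^\ell x\}$—namely that for any $\epsilon > 0$ there is a window length $M(\epsilon)$ such that every $M(\epsilon)$-block of the orbit is $\epsilon$-dense in $X_2$—so that given $\epsilon$ one first fixes $M = M(\epsilon)$, then chooses $j$ so large that $\mathrm{Lip}(g^M) \cdot d(g^{k_j} x, y) < \epsilon$, and finally picks $m \in [0, M]$ with $g^{m+k_j}x$ close to $z$. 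Establishing this uniform recurrence, or equivalently the horocycle-invariance of $D^*$ directly, is the delicate step to complete the $T_n$ case.
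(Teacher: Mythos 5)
Your $T_\alpha$ argument is correct and is exactly the paper's argument: the key point is that $g_\alpha := \mathrm{diag}(\alpha, 1) \in \SL_3(\ZZ)$ satisfies $g_\alpha h_3(s) = h_3(\alpha s)$ in $X_3$ and commutes with $g = \mathrm{diag}(e^{t_0},e^{t_0},e^{-2t_0})$, so left multiplication by $g_\alpha$ carries the $g$-orbit of $h_3(s)$ to that of $h_3(Ts)$.

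The $T_n$ half of your proposal has a genuine gap, and the proposed repair does not work. Your reduction is correct as far as it goes: $h_2(ns) = u_{(n-1)s}\,h_2(s)$, so it would suffice to know that the set $D^*$ of points of $X_2$ with dense forward $g$-orbit is invariant under the expanding horocycle group. But the claim that you appeal to---that a dense forward $g$-orbit enjoys ``uniform recurrence,'' i.e.\ every length-$M(\epsilon)$ block $\{g^\ell x: k \le \ell < k+M\}$ is $\epsilon$-dense---is false here. The space $X_2 = \SL_2(\RR)/\SL_2(\ZZ)$ is noncompact, so no finite block can be $\epsilon$-dense in $X_2$; and even restricting attention to $\epsilon$-density in a fixed compact $K \subset X_2$, a merely dense orbit can spend arbitrarily long stretches in the cusp and miss $K$ on arbitrarily long windows, so the return times need not be syndetic. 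So there is no window bound $M(\epsilon)$ to fix in advance, and the Lipschitz obstruction you identify ($\mathrm{Lip}(g^m)$ growing exponentially in $m$) cannot be overcome this way. The paper does not attempt any such direct horocycle-invariance argument for $T_n$: it invokes \cite[Lemma~4.3]{efs11}, whose proof goes through the $S$-arithmetic extension used later in Section~\ref{secEquidistribution}, where the $\times n$ map \emph{does} have a commuting algebraic incarnation $a \in PGL_2(\RR)\times\prod_i PGL_2(\QQ_{p_i})$, and one then descends along the proper map $\eta\colon Y\to X_2$. In other words, for $T_n$ the right move is to recover a $T_\alpha$-style commuting-element argument at the $S$-arithmetic level, not to prove unstable-horocycle invariance of $D^*$ directly.
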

\begin{proof}
In the setting of Theorem \ref{thm1}, 
it is proved in \cite[Lemma 4.3]{efs11} that the set
\[
D'(g_t; X):=\{s\in I: \overline {\{g_t h(s): t\ge 0\}}=X\}
\]
is $T$-invariant. 
The same  argument there replacing 
$D'(g_t; X)$ by $D(g, X)$ gives the lemma. 

 In  the setting of Theorem \ref{thm2}, we let  
 \begin{equation}\label{eq;galpha}
g_\alpha=
\left(
\begin{array}{cc}
\alpha & 0\\
0 & 1
\end{array}
\right).
\end{equation}
Since $g_\alpha
\in SL_3(\mathbb Z)$ a simple calculation shows that 
\[g_t (h(\alpha(s)))=g_{\alpha}g_t(h(s))  \] 
from which the lemma follows.  
\end{proof}

\begin{prop}\label{corFullMeasure}
Let $\nu$ be a $T$-invariant and ergodic probability measure on $I$. Let $\mu$ be a weak-$*$
limit  of (\ref{eq;discrete}). 
Suppose that $\mu=c\mu_X$ for some $0<c\le 1$. Then
 
 \[\nu(D(g, X))=1.\] 
\end{prop}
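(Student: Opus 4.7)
My plan is to show $\nu(D(g,X))>0$: combined with Lemma \ref{lemmTInvariance} and the $T$-ergodicity of $\nu$, this will force $\nu(D(g,X))=1$. The natural object to introduce is the orbital probability measure
\[\eta_s^N:=\frac{1}{N}\sum_{i=0}^{N-1}\delta_{g^i h(s)},\]
whose $\nu$-average equals $\frac{1}{N}\sum_{i=0}^{N-1}(g^i)_*(h_*\nu)$. By hypothesis the latter tends weak-$*$ to $c\mu_X$ along some subsequence $N_k\to\infty$.

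To pass from this averaged convergence to information about individual $s$, I would couple $s$ with the orbit through the measures $\nu_{N_k}:=\int_I(\delta_s\otimes\eta_s^{N_k})\,d\nu(s)$ on $I\times X$. By weak-$*$ compactness, along a further subsequence these converge to a sub-probability $\nu_\infty$ on $I\times X$ whose marginal on $X$ is $c\mu_X$. Disintegrating $\nu_\infty$ over the projection onto $I$ produces a measurable family $s\mapsto\sigma_s$ of sub-probability measures on $X$ satisfying
\[\int_I\sigma_s\,d\nu(s)=c\mu_X,\]
and each $\sigma_s$ is $g$-invariant because $\eta_s^N$ is $g$-invariant up to an error of order $1/N$. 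A diagonal argument over a countable dense subfamily of $C_c(X)$ then yields a sub-subsequence along which $\eta_s^{N_{k_j}}\to\sigma_s$ weak-$*$ for $\nu$-a.e.\ $s$.

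The key structural step is $g$-ergodic decomposition. Since $g=g_{t_0}$ is the time-$t_0$ map of a mixing diagonal flow on $X=\SL_d(\mathbb R)/\SL_d(\mathbb Z)$ (Howe--Moore), $\mu_X$ is $g$-ergodic and the ergodic decomposition of $c\mu_X$ is precisely $c\delta_{\mu_X}$. Writing $\sigma_s=\int\xi\,dP_s(\xi)$ over $g$-ergodic probabilities and integrating in $s$ forces $\int P_s\,d\nu=c\delta_{\mu_X}$, so $P_s$ is concentrated at $\mu_X$ for $\nu$-a.e.\ $s$. Consequently $\sigma_s=\lambda_s\mu_X$ with $\lambda_s:=\sigma_s(X)\in[0,1]$ and $\int\lambda_s\,d\nu=c>0$, so in particular $E':=\{s:\lambda_s>0\}$ has positive $\nu$-measure. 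For $s\in E'$ the cluster point $\sigma_s=\lambda_s\mu_X$ has full support $X$, so $\eta_s^{N_{k_j}}(V)>0$ for every non-empty open $V\subset X$ once $j$ is large, which forces $\mathcal O_g^{\mathbb N}(h(s))$ to be dense in $X$. Hence $E'\subset D(g,X)$ and $\nu(D(g,X))\ge\nu(E')>0$, as required.

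The main technical hurdle will be the joint measurable selection underlying the cluster-point family $\{\sigma_s\}$---in particular passing from joint weak-$*$ convergence on $I\times X$ to $\nu$-a.e.\ weak-$*$ convergence of the $\eta_s^{N_{k_j}}$. Once that is in hand, the remainder reduces to the uniqueness of $g$-ergodic decompositions and the $g$-ergodicity of $\mu_X$, both standard.
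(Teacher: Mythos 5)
Your overall reduction (show $\nu(D(g,X))>0$, then upgrade to $1$ via Lemma~\ref{lemmTInvariance} and $T$-ergodicity) is fine, and the construction of the limit joining $\nu_\infty$ on $I\times X$ with $X$-marginal $c\mu_X$ and $\nu$-a.e.\ $g$-invariant fibre measures $\sigma_s$ is also fine. The genuine gap is the step you yourself flag and then treat as a technicality: the claim that a diagonal argument produces one subsequence $N_{k_j}$ along which $\eta_s^{N_{k_j}}\to\sigma_s$ weak-$*$ for $\nu$-a.e.\ $s$. This is false in general: weak-$*$ convergence of the joinings $\int\delta_s\otimes\eta_s^{N_k}\,d\nu(s)$ says nothing about pointwise behaviour of the fibres, because the fibre measures may oscillate in an $s$-dependent way (a Rademacher-type example, with $\eta_s^{N}$ a point mass at one of two points according to the $N$-th binary digit of $s$, has the joinings converging while no subsequence works on a set of positive measure; the disintegration of the limit is the average $\tfrac12(\delta_a+\delta_b)$, which is not a cluster point of any fibre sequence). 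Without that step, $\sigma_s$ loses all connection to the individual orbit of $h(s)$: it need not even be supported on $\overline{\mathcal O_g^{\mathbb N}(h(s))}$, since the relation $\{(s,x):x\in\overline{\mathcal O_g^{\mathbb N}(h(s))}\}$ is not closed in $I\times X$ (nearby $s'$ contribute points far from the orbit closure at $s$). Hence the final implication ``$\sigma_s=\lambda_s\mu_X$ with $\lambda_s>0$ on a positive-measure set, therefore those orbits are dense'' does not follow, and the ergodic-decomposition part of your argument, while correct, is applied to an object that no longer certifies density of any orbit.

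The way to retain the link between the limit measure and individual orbits is to decompose \emph{before} passing to the limit according to which ball the orbit misses, which is what the paper does: writing $ND=\bigcup_{i,n}ND(i,n)$ with $ND(i,n)$ the (closed) set of $s$ whose $g$-orbit of $h(s)$ stays at distance $\ge 1/n$ from $x_i$, one restricts $\nu$ to each piece, extracts limits $\mu_{i,n}$ of the corresponding averages, and notes $\mu_{i,n}(B^X(x_i,1/n))=0$ by the open-set portmanteau inequality. Since each $\mu_{i,n}$ is $g$-invariant and $g$ is ergodic for $\mu_X$, each $\mu_{i,n}$ is singular to $\mu_X$, so under the assumption $\nu(ND)=1$ the limit $\mu=\sum_{i,n}\mu_{i,n}$ would be singular to $\mu_X$, contradicting $\mu=c\mu_X$ with $c>0$. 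If you want to salvage your joining formulation, you would have to run essentially this same countable decomposition inside the joining (using that $ND(i,n)\times B^X(x_i,1/n)$ is $\nu_{N_k}$-null for all $k$), at which point the joining adds nothing; as written, your proposal has a genuine gap at its central step.
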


\noindent We prove Proposition~\ref{corFullMeasure} in Section~\ref{secFullMeasure}.

\begin{rema}
If we were to only prove Theorems~\ref{thm1} and~\ref{thm2}, then we would only need for the conclusion of Proposition~\ref{corFullMeasure} to be $\nu(D(g, X))>0$, in which case Lemma~\ref{lemmTInvariance} is not used.  For Theorems~\ref{coro1} and~\ref{coro2}, we need the full conclusion of Proposition~\ref{corFullMeasure} and thus also Lemma~\ref{lemmTInvariance}.  The analogous remark can be made for~\cite[Theorem~3.2]{BET} and~\cite[Lemma~3.1]{BET}.
\end{rema}

\subsection{Finishing the proof of Theorems~\ref{thm1} and~\ref{thm2}}

Apply Propositions~\ref{prop;equidistribution},~\ref{prop;nonescape}, and~\ref{corFullMeasure} to the measures $\nu:= \nu(q)$ supported on $E(q)$, to obtain $\nu(D(g, X))=1$.  Next apply the mass distribution principle to (\ref{eqnPowerMeasure}) with $\delta := \delta(q)<1$ (where $\delta(q) \nearrow 1$ as $q \rightarrow \infty$) to obtain \[\dim(D(g, X) \cap E(q)) \geq (\delta(q)-\epsilon) \mathrm{dim}(E(q)).\]  (We note that, since (\ref{eq;local}) holds with $\dim(I)$ replaced by $\dim(E(q))$, so does (\ref{eqnPowerMeasure}).)  Letting $\epsilon \rightarrow 0$ and taking a union over $q$, we have that \[\dim\big(D(g, X) \cap ND(T)\big) = \dim(I),\] as desired.

\section{Proof of Theorems~\ref{coro1} and~\ref{coro2}}\label{secProofCoro1and2} Let $T_m$ be as in Theorem~\ref{coro1} and let 
$\nu$ be as in the proof of Theorem \ref{thm1}.   We have $\nu(D(T_m))=1$ by~\cite[Theorem~3.2] {BET} and thus 
\[
\nu\bigg(\cap_i D(g_{t_i} , X_2) \cap \big(\cap_{m} D(T_m)\big) \bigg)=1.
\]
  The rest of the proof of Theorem~\ref{coro1} is exactly the same as that  for the Theorem \ref{thm1}.  Replacing $T_m$ with $T_\beta$ from Theorem~\ref{coro2} gives the proof for Theorem~\ref{coro2}.

\section{Proof of Proposition~\ref{prop;equidistribution}}\label{secEquidistribution}


We claim that any weak-$*$ limit  $\mu_1$ of (\ref{eq;continuous})
is equal to $c\mu_X$. We first prove the proposition using this claim  and then give its proof in the rest of this section.

Let $\mu $ be a weak-$*$ limit of  (\ref{eq;discrete}). 
If $\mu=0$ there is nothing to prove. Assume that $\mu(X)=c>0$.
It follows from (\ref{eq;link}) that 
\[
\mu_2:=\frac{1}{t_0}\int_0^{t_0} (g_t)_* \mu\, dt
\]
is a weak-$*$ limit of (\ref{eq;continuous}). 
The claim above implies that $\mu_2=c\mu_X$. On the other hand
\cite[Theorem 5.27]{elw} implies
\begin{equation}\label{eq;entropy}
h_{c^{-1}\mu_2}(g)=\frac{1}{t_0}\int_0^{t_0}  h_{c^{-1}(g_t)_*\mu}(g)\, dt. 
\end{equation}
It follows from \cite[Corollary 7.10]{el10} that 
\[
h_{c^{-1}(g_t)_*\mu}(g)=h_{c^{-1}\mu}(g)\le h_{\mu_X}(g)
\]
and that equality holds if and only if $\mu$ and hence $(g_t)_*\mu$ is 
equal to $c\mu_X$. Therefore (\ref{eq;entropy}) implies that $\mu=c\mu_X$
which completes the proof of the proposition. 

Now we prove the claim in the setting of Theorem \ref{thm1}.  The natural map
\[
\pi:  \SL_2(\mathbb R) \to PGL_2(\mathbb R)
\]
induces  a diffeomorphism of homogeneous spaces 
\[
X_2\to \widetilde{X}_2:= PGL_2(\mathbb R)/PGL_2(\mathbb Z). 
\]
Let $n=p_1^{\sigma_1}\cdots p_k^{\sigma_k}$ be the prime decomposition of $n$. 
Let 
\[
L=\prod _{i=1}^k PGL_2(\mathbb Q_{p_i}), \quad
\Gamma=PGL_2(\mathbb Z[\frac{1}{p_1}, \ldots, \frac{1}{p_k}]) \quad \mbox{ and }\quad 
K=\prod _{i=1}^k PGL_2(\mathbb Z_{p_i}). 
\]
Then $Y=PGL_2(\mathbb R)\times L/\Gamma$  where 
$\Gamma $ embeds 
diagonally is a finite volume homogeneous space. The natural map
\[
\eta: Y\to X _2,
\]
which maps $(\pi(g), h)\mod \Gamma$,  where $g\in \SL_2(\mathbb R)$ and $h\in K$, to 
$g\mod \SL_2(\mathbb Z)$,  has compact fibers. 

Let 
\[
a=\left( 
\begin{array}{cc}
n & 0 \\
0 & 1
\end{array}
\right) ^{k+1}\in PGL_2(\mathbb R)\times L. 
\]
It follows from \cite[Lemma 5.3]{s12} that there exists an $a$-invariant and ergodic probability measure
$\tilde \nu$ on $Y$ such that $\eta_*\tilde\nu=h_*\nu$ and $h_{\tilde \nu}(a)>0$.
Let  $\mu_1$ be a weak-$*$ limit of (\ref{eq;continuous}) for the sequence  $\{\tau_i:\in \mathbb N\}$.
By possibly passing to a subsequence we assume that 
\[
\tilde \mu_1 :=\lim_{i\to \infty}\frac{1}{\tau_i}\int_0^{\tau_i} (\pi(g_t))_*((\pi\circ  h)_*\nu)\ dt
\]
exists. 
If $\mu_1(X)=0$ there is nothing to prove. 
So we assume that $\mu_1(X)=c>0$.
Since $\eta$ has compact fibers we have $\tilde \mu_1 (Y)=c>0$. 
It follows from \cite[Theorem 3.1]{s12} that 
all the ergodic components of $\tilde\mu_1$ with respect to the group generated 
by $a$ have positive entropy. 
Therefore \cite[Theorem 1.1]{l06} implies that $\tilde \mu_1 =c\mu_Y$. 
Since $\eta\tilde \mu_1=\mu_1$, we have 
\[
\mu_1=c\mu_{X_2}
\]
which completes the proof of the claim in the setting of Theorem \ref{thm1}. 

Now we prove the claim in the setting of Theorem \ref{thm2}. 
Let $\mu_1$ be a weak-$*$ limit of (\ref{eq;continuous}). 
Without loss of generality we assume that $\mu_1(X_3)=c>0$. 
It follows from \cite[Theorem 3.1]{s12} that 
all the ergodic components of $\nu$ with respect to the group generated 
by $g_{\alpha}$, where $g_\alpha$ is defined in (\ref{eq;galpha}),  have positive entropy.  Therefore \cite[Corollary 1.4]{ekl06}
implies that $\mu_1=c\mu_{X_3}$ which completes the proof of the claim and 
hence the proposition.

\section{Proof of Proposition~\ref{corFullMeasure}}\label{secFullMeasure}


The proof is adapted from the proof of~\cite[Theorem~3.2]{BET}. 
Let $d$ denote the distance function on $X$.  Fix a countable dense subset $\{x_1, x_2, \cdots\} \in X$.  Inductively define the following disjoint sets: \begin{align*}
ND(1,1) &:= \bigg{\{}s \in I :  d\bigg(\overline{\{g^kh(s)\}_{k\in \mathbb N}}, x_1)\bigg) \geq 1\bigg{\}} \\
ND(i,n)  &:= \bigg{\{}s \in I :  d\bigg(\overline{\{g^k h(s)\}_{k\in \mathbb N}}, x_i)\bigg) \geq \frac1 n\bigg{\}} \backslash \bigcup_{(j,m) < (i,n)} ND(j,m).
\end{align*}  (Here we have fixed the total ordering  $<$ of $\NN \times \NN$ given by $(i,n)<(i',n')$ if either $i+n<i'+n'$ or $i+n= i'+n'$ and $i<i'$.)  It follows that $ND:=I \backslash D(g, X)$ is the union of these sets. 

Now assume that the conclusion is false; this, by Lemma~\ref{lemmTInvariance}, is equivalent to $\nu(ND)=1$.  
Decompose $\nu= \sum_{i,n}\nu_{i,n}$ where  $\nu_{i,n}$ is the restriction of $\nu$ to $ND(i,n)$.  
 Using  Banach--Alaoglu theorem, from the subsequence used to obtain $\mu$, we may extract
 a further subsequence $N_\ell$
  such that for all $(i, n)$ the
 measure
  \[\frac{1}{N_\ell}\sum_{k=0}^{N_\ell-1} (g^k)_*(h_*\nu_{i,n})\]
  converges in the weak-$*$ topology to $\mu_{i,n}$.
  We note that the $\mu_{i,n}$ are $g$-invariant measures on $X$ and we have $\mu = \sum_{i,n} \mu_{i,n}$ by the disjointness of the sets $\{ND(i,n)\}$.  
 
 Let $B^M(y, r)$ denote an open ball in a metric space $M$ around $y \in M$ of radius $r>0$.  By construction, we have that $\nu_{i,n}$-a.e. point $s$ in $I$ satisfies \begin{align*}
h(s) \not \in g^{-k} B^X(x_i, 1/n)   \end{align*} for all $k \geq 0$.   Hence, \[\nu_{i,n}(h^{-1} g^{-k} B^X(x_i, 1/n) )  = (g^k)_*h_* \nu_{i,n}(B^X(x_i, 1/n)) = 0.\]  Since this holds for all $k \geq 0$, we have \begin{align*}
\mu_{i,n}\left(B^X(x_i, 1/n)\right) = 0.\end{align*}  Finally, since $\mu_{i,n}$ is $g$-invariant, we have that \begin{align*}
\bigcup_{k=0}^\infty g^{-k}B^X(x_i, 1/n)  \end{align*} is a $\mu_{i,n}$-null set, but  it is also a full Haar measure set by the ergodicity of $g$.  Consequently, $\mu_{i,n}$ is singular to Haar measure.  Now, using the fact that a finite measure which is an infinite sum of finite measures, each of which is singular to $\mu_X$, is singular to $\mu_X$ itself, we contradict the assumption that $\mu=c\mu_X$ for some $c>0$. This completes the proof.  

\end{document}